\topskip \setlength{\parindent}{0pt} \setlength{\parskip}{5pt plus
\theoremstyle{remark}
\theoremstyle{plain}
\newtheorem{theorem}{Theorem}[section]
\newtheorem{definition}[theorem]{Definition}
\newtheorem{corollary}[theorem]{Corollary}
\newcommand{\stirling}[2]{\genfrac{[}{]}{0pt}{}{#1}{#2}}
\newcommand{\Stirling}[2]{\genfrac{\{}{\}}{0pt}{}{#1}{#2}}
\newcommand{\Lah}[2]{\genfrac{\lfloor}{\rfloor}{0pt}{}{#1}{#2}}
\begin{document}
\title{Generalized $r$-Lah numbers}
\date{\small }
\maketitle

\begin{center}Mark Shattuck\\
Department of Mathematics, University of Tennessee, 37996 Knoxville, TN, USA

{\tt shattuck@math.utk.edu}
\end{center}


\begin{abstract}
In this paper, we consider a two-parameter polynomial generalization, denoted by  $\mathcal{G}_{a,b}(n,k;r)$, of the
$r$-Lah numbers which reduces to these recently introduced numbers when $a=b=1$.  We present several identities for $\mathcal{G}_{a,b}(n,k;r)$ that generalize earlier identities given for the $r$-Lah and $r$-Stirling numbers.  We also provide combinatorial proofs of some identities involving the $r$-Lah numbers which were established previously using algebraic methods.  Generalizing these arguments yields orthogonality-type relations that are satisfied by $\mathcal{G}_{a,b}(n,k;r)$.

\end{abstract}

\noindent{\em Keywords:} $r$-Lah numbers, $r$-Stirling numbers, polynomial generalization

\noindent 2010 {\em Mathematics Subject Classification:} 05A19, 05A18

\section{Introduction}

Let $[m]=\{1,2,\ldots,m\}$ if $m \geq 1$, with $[0]=\varnothing$.  By a \emph{partition} of $[m]$, we will mean a collection of non-empty, pairwise disjoint subsets of $[m]$, called \emph{blocks}, whose union is $[m]$.  A \emph{Lah distribution} will refer to a partition of $[m]$ in which elements within each block are ordered (though there is no inherent ordering for the blocks themselves).  Following the notation from \cite{NR}, let $\Lah{n}{k}_r$ denote the number of Lah distributions of the elements of $[n+r]$ having $k+r$ blocks such that the elements of $[r]$ belong to distinct (ordered) blocks.  The $\Lah{n}{k}_r$ are called $r$-\emph{Lah numbers} and have only recently been studied.

The numbers $\Lah{n}{k}_r$ were once mentioned in \cite{Ch} (together with $r$-Whitney-Lah numbers) and appear in \cite{Sl} under the name of restricted Lah numbers.  A few properties of the $\Lah{n}{k}_r$ were established in \cite{Be}, and a systematic study of these numbers was undertaken in \cite{NR}.  When $r=0$, the $r$-Lah number reduces to the Lah number $\Lah{n}{k}$ (named for the mathematician Ivo Lah \cite{La1} and often denoted by $L(n,k)$), which counts the number of partitions of $[n]$ into $k$ ordered blocks (see, e.g., \cite{BeB,Wa}).

Earlier, analogous $r$-versions of the Stirling numbers of the first and second kind were introduced by Broder \cite{Br}, and later rediscovered by Merris \cite{Mer}, where $r$ distinguished elements have to be in distinct cycles or blocks.  Following the parametrization and notation used in \cite{NR}, let $\stirling{n}{k}_r$ be the number of permutations of $[n+r]$ into $k+r$ cycles in which members of $[r]$ belong to distinct cycles and let $\Stirling{n}{k}_r$ be the number of partitions of $[n+r]$ into $k+r$ blocks in which members of $[r]$ belong to distinct blocks.  There are several algebraic properties for which $\stirling{n}{k}_r$ and $\Stirling{n}{k}_r$ satisfy analogous identities, among them various recurrences and connection constant relations (see \cite[Section 2]{NR} for a comparative study).  Analogues of some of these properties involving $r$-Lah numbers were established in \cite{NR}.

In this paper, we consider a two-parameter polynomial generalization of the number $\Lah{n}{k}_r$ which reduces to it when both parameters are unity.  Denoted by $\mathcal{G}_{a,b}(n,k;r)$, it will also be seen to specialize to $\stirling{n}{k}_r$ when $a=1,b=0$ and to $\Stirling{n}{k}_r$ when $a=0,b=1$.  Since the $\mathcal{G}_{a,b}(n,k;r)$ reduce to the $r$-Lah numbers when $a=b=1$, we will refer to them as \emph{generalized} $r$-\emph{Lah numbers}.  We note that the special case $r=0$ is equivalent to a re-parametrized version of the numbers $\mathfrak{S}_{s;h}(n,k)$ (see \cite{MSS,MSS2}) which arise in conjunction with the normal ordering problem from mathematical physics.  Furthermore, it is seen that $\mathcal{G}_{a,b}(n,k;r)$ is a variant of the generalized Stirling polynomial introduced by Hsu and Shiue in \cite{HS} and studied from an algebraic standpoint. Finally, we remark that $\mathcal{G}_{a,b}(n,k;r)$ can be reached by a special substitution into the partial $r$-Bell polynomials introduced in \cite{MiR}, but here we look at some specific properties of $\mathcal{G}_{a,b}(n,k;r)$ that were not considered more generally in \cite{MiR} (several of which do not seem to hold in the more general setting).

The paper is organized as follows.  We define the polynomial $\mathcal{G}_{a,b}(n,k;r)$ in terms of two statistics on Lah distributions and derive several of its properties, mostly by combinatorial arguments.  This furnishes a common generalization for several of the earlier identities proven for $r$-Stirling and $r$-Lah numbers (see \cite{Br,Me,NR}).  Some recurrences are also given for the row sum $\sum_{k=0}^n \mathcal{G}_{a,b}(n,k;r)$, among them a generalization of Spivey's Bell number formula \cite{Sp1}.

In the third section, we provide combinatorial proofs of four identities involving $\Lah{n}{k}_r$, which were shown in \cite{NR} by algebraic arguments.  To do so, in three cases, we define appropriate sign-changing involutions on certain ordered pairs of combinatorial configurations, while in the other case, a suitable bijection is defined between the ordered pairs and a subset of the Lah distributions.  Modifying the proofs will give orthogonality-type relations satisfied by the generalized $r$-Lah numbers.

We will make use of the following notation and conventions.  Empty sums will take the value zero, and empty products the value one.  The binomial coefficient $\binom{n}{k}$ is defined as $\frac{n!}{k!(n-k)!}$ if $0 \leq k \leq n$, and will be taken to be zero otherwise.  If $m$ and $n$ are positive integers, then $[m,n]=\{m,m+1,\ldots,n\}$ if $m \leq n$, with $[m,n]=\varnothing$ if $m>n$.  Finally, if $n$ is an integer, then let  $n^{\overline{m}}=\prod_{i=0}^{m-1} (n+i)$ and $n^{\underline{m}}=\prod_{i=0}^{m-1} (n-i)$ if $m \geq 1$, with $n^{\overline{0}}=n^{\underline{0}}=1$ for all $n$.

\section{Generalized $r$-Lah relations}

Given $0 \leq k \leq n$ and $r \geq 0$, let $\mathcal{L}_r(n,k)$ denote the set of Lah distributions enumerated by $\Lah{n}{k}_r$, i.e., partitions of $[n+r]$ into $k+r$ ordered blocks in which the elements of $[r]$ belong to distinct blocks.  We will say that the elements of $[r]$ within a member of $\mathcal{L}_r(n,k)$ are \emph{distinguished} and apply this term also to the blocks in which they belong.  We will sometimes refer to the members of $\mathcal{L}_r(n,k)$ as $r$-\emph{Lah distributions}.

Note that when $r=0$ or $r=1$, there is no restriction introduced by distinguished elements so that $\Lah{n}{k}_0=\Lah{n}{k}$ and $\Lah{n}{k}_1=\Lah{n+1}{k+1}$.  Accordingly, when $r=0$, we will often omit the subscript and let $\mathcal{L}(n,k)$ denote the set of all Lah distributions of size $n$ having $k$ blocks.  Note that $\mathcal{L}_r(n,k)$ is a proper subset of $\mathcal{L}(n+r,k+r)$ when $r \geq 2$ and $n>k$.

We consider a generalization of the numbers $\Lah{n}{k}_r$ obtained by introducing a pair of statistics on $\mathcal{L}_r(n,k)$ as follows.  If $\lambda \in \mathcal{L}(n,k)$ and $i \in [n]$, then we will say that $i$ is a \emph{record low} of $\lambda$ if there are no elements $j<i$ to the left of $i$ within its block in $\lambda$.  For example, if $n=9$, $k=3$ and $\lambda=\{1,5,3\},\{8,4,7,2,9\},\{6\} \in \mathcal{L}(9,3)$, then the element 1 is a record low in the first block, 8, 4 and 2 are record lows in the second, and 6 is a record low in the third block for a total of five record lows altogether.  Note that the first element within a block as well as the smallest are always record lows.

We now recall the following statistic from \cite{MSS}.

\begin{definition}\label{d1}
Given $\lambda \in \mathcal{L}(n,k)$, let $rec^*(\lambda)$ denote the total number of record lows of $\lambda$ which are not themselves the smallest element of a block.  Let $nrec(\lambda)$ denote the number of elements of $[n]$ which are not record lows of $\lambda$.
\end{definition}

To illustrate, if $\lambda$ is as above, then $rec^*(\lambda)=2$ (for the 8 and 4) and $nrec(\lambda)=4$ (for 5, 3, 7 and 9).  We now consider the restriction of the $rec^*$ and $nrec$ statistics to $\mathcal{L}_r(n,k)$ and define the distribution polynomial $\mathcal{G}_{a,b}(n,k;r)$ by
$$\mathcal{G}_{a,b}(n,k;r)=\sum_{\lambda \in \mathcal{L}_r(n,k)}a^{nrec(\lambda)}b^{rec^*(\lambda)},$$
where $a$ and $b$ are indeterminates.

Note that $\mathcal{G}_{a,b}(n,k;r)$ reduces to $\Lah{n}{k}_r$ when $a=b=1$, by definition.  Furthermore, it is seen that $\mathcal{G}_{a,b}(n,k;r)$ reduces to $\stirling{n}{k}_r$ when $a=1,b=0$ and to $\Stirling{n}{k}_r$ when $a=0,b=1$.  Note that in the former case, the first element must be the smallest within each block in order for $\lambda \in \mathcal{L}_r(n,k)$ to have a non-zero contribution towards $\mathcal{G}_{a,b}(n,k;r)$, while in the latter case, the elements must be arranged in decreasing order within each block of $\lambda$.

Given $\lambda \in \mathcal{L}(n,k)$, let $w(\lambda)=a^{nrec(\lambda)}b^{rec^*(\lambda)}$ denote the \emph{weight of} $\lambda$, and by the weight of a subset of $\mathcal{L}(n,k)$, we will mean the sum of the weights of all the members contained therein.  Note that $\mathcal{G}_{a,b}(n,k;r)$ can only assume non-zero values when $0 \leq k \leq n$ and $r \geq 0$.   We now write a recurrence for $\mathcal{G}_{a,b}(n+1,k;r)$ where $1 \leq k \leq n+1$.  First note that the total weight of all members of $\mathcal{L}_r(n+1,k)$ in which the element $n+r+1$ belongs to its own block is $\mathcal{G}_{a,b}(n,k-1;r)$ since $n+r+1$ in this case contributes to neither the $nrec$ nor $rec^*$ values.  The weight of all members of $\mathcal{L}_r(n+1,k)$ in which $n+r+1$ starts a block containing at least one member of $[n+r]$ is $b(k+r)G_{a,b}(n,k;r)$ since $rec^*$ is increased by one by the addition of $n+r+1$.  Finally, if $n+r+1$ directly follows some member of $[n+r]$ within a block, then $nrec$ is increased by one, which implies a contribution of $a(n+r)G_{a,b}(n,k;r)$ in this case.  Combining the three previous cases gives the recurrence
\begin{equation}\label{rec1}
\mathcal{G}_{a,b}(n+1,k;r)=\mathcal{G}_{a,b}(n,k-1;r)+(an+bk+(a+b)r)\mathcal{G}_{a,b}(n,k;r), \qquad 1 \leq k \leq n+1,
\end{equation}
with boundary values $\mathcal{G}_{a,b}(0,k;r)=\delta_{k,0}$ and $\mathcal{G}_{a,b}(n,0;r)=\prod_{i=0}^{n-1}(a(i+r)+br)$.

\textbf{Remark:} By \eqref{rec1}, one sees that the $\mathcal{G}_{a,b}(n,k;r)$ occur as a special case of the solution to a general bivariate recurrence in \cite{BSV1,BSV2}, which was approached algebraically (wherein general formulas for the relevant exponential generating functions were found).

The $a=b=1$ case of the following result occurs as \cite[Theorem 3.2]{NR}.

\begin{theorem}\label{gt1}
If $n\geq 0$, then
\begin{equation}\label{gt1e1}
\prod_{i=0}^{n-1}(x+(a+b)r+ai)=\sum_{k=0}^n \mathcal{G}_{a,b}(n,k;r)\prod_{i=0}^{k-1}(x-bi).
\end{equation}
\end{theorem}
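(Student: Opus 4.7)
The plan is to prove \eqref{gt1e1} by induction on $n$, using recurrence \eqref{rec1} as the engine. This is the natural approach because the LHS and RHS satisfy compatible degree/recursive structure in $n$, and \eqref{rec1} is exactly the tool needed to push the identity from $n$ to $n+1$.

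The base case $n=0$ is immediate: both sides are equal to $1$ by the boundary condition $\mathcal{G}_{a,b}(0,k;r)=\delta_{k,0}$ together with the empty-product convention.

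For the inductive step, assume \eqref{gt1e1} holds for some $n\geq 0$. Multiply both sides by the single extra factor $x+(a+b)r+an$; on the left this produces the product on the LHS of \eqref{gt1e1} with $n$ replaced by $n+1$. The core algebraic trick on the right is to write
\begin{equation*}
x+(a+b)r+an \;=\; (x-bk) \,+\, \bigl(an+bk+(a+b)r\bigr),
\end{equation*}
so that
\begin{equation*}
\bigl(x+(a+b)r+an\bigr)\prod_{i=0}^{k-1}(x-bi) \;=\; \prod_{i=0}^{k}(x-bi) \,+\, \bigl(an+bk+(a+b)r\bigr)\prod_{i=0}^{k-1}(x-bi).
\end{equation*}
Substituting this into $\sum_k \mathcal{G}_{a,b}(n,k;r)\prod_{i=0}^{k-1}(x-bi)$ and reindexing $k\mapsto k-1$ in the sum coming from the first piece, the coefficient of $\prod_{i=0}^{k-1}(x-bi)$ becomes
\begin{equation*}
\mathcal{G}_{a,b}(n,k-1;r) \,+\, \bigl(an+bk+(a+b)r\bigr)\mathcal{G}_{a,b}(n,k;r),
\end{equation*}
which by \eqref{rec1} is precisely $\mathcal{G}_{a,b}(n+1,k;r)$. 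This completes the induction.

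The only ``obstacle'' is spotting the splitting $x+(a+b)r+an=(x-bk)+(an+bk+(a+b)r)$ used to convert the multiplier into a combination compatible with both the falling-factorial basis $\prod_{i=0}^{k-1}(x-bi)$ and the recurrence coefficient $an+bk+(a+b)r$. Once that is in hand, the argument is a routine reindexing. A combinatorial proof is likely also available (interpreting the LHS via weighted placements of $n$ objects into boxes distinguished by $x$ and the $r$ special elements, then classifying by the number of nonempty boxes), but the inductive route via \eqref{rec1} is by far the shortest and is what I would present.
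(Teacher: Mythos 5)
Your proof is correct and follows essentially the same route as the paper's: induction on $n$, multiplying the inductive hypothesis by the extra factor $x+(a+b)r+an$, splitting it as $(x-bk)+(an+bk+(a+b)r)$, reindexing, and invoking \eqref{rec1}. The only cosmetic difference is that the paper displays the $k=0$ boundary term separately before reassembling the sum, which your argument handles implicitly via the convention $\mathcal{G}_{a,b}(n,-1;r)=0$ and the boundary value of $\mathcal{G}_{a,b}(n+1,0;r)$.
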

\begin{proof}
Proceed by induction on $n$, the $n=0$ case clear.  If $n \geq 0$, then
\begin{align*}
\prod_{i=0}^n(x+(a+b)r+ai)&=(x+(a+b)r+an)\prod_{i=0}^{n-1}(x+(a+b)r+ai)\\
&=(x+(a+b)r+an)\sum_{k=0}^n \mathcal{G}_{a,b}(n,k;r)\prod_{i=0}^{k-1}(x-bi)\\
&=\sum_{k=0}^n \mathcal{G}_{a,b}(n,k;r)\left[\prod_{i=0}^k(x-bi)+(an+bk+(a+b)r)\prod_{i=0}^{k-1}(x-bi)\right]\\
&=(an+(a+b)r)\mathcal{G}_{a,b}(n,0;r)+\sum_{k=1}^{n+1}\mathcal{G}_{a,b}(n,k-1;r)\prod_{i=0}^{k-1}(x-bi)\\
\end{align*}
\begin{align*}
&\quad\quad\quad+\sum_{k=1}^{n+1}(an+bk+(a+b)r)\mathcal{G}_{a,b}(n,k;r)\prod_{i=0}^{k-1}(x-bi)\\
&\quad\quad=\sum_{k=0}^{n+1} G_{a,b}(n+1,k;r)\prod_{i=0}^{k-1}(x-bi),
\end{align*}
by \eqref{rec1}, which completes the induction.
\end{proof}

The $a=b=1$ case of \eqref{gt2e1} below occurs as \cite[Theorem 3.3]{NR}.

\begin{theorem}\label{gt2}
We have
\begin{equation}\label{gt2e1}
\mathcal{G}_{a,b}(n,k;r)=\sum_{i=k}^n\mathcal{G}_{a,b}(i-1,k-1;r)\prod_{j=i}^{n-1}(aj+bk+(a+b)r), \qquad 1 \leq k \leq n,
\end{equation}
and
\begin{equation}\label{gt2e2}
\mathcal{G}_{a,b}(n,k;r)=\sum_{i=0}^{k}(a(n+r-i-1)+b(k+r-i))\mathcal{G}_{a,b}(n-i-1,k-i;r), \qquad 0 \leq k <n.
\end{equation}
\end{theorem}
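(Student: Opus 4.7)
My plan is to prove both identities by a combinatorial decomposition of an arbitrary $\lambda \in \mathcal{L}_r(n,k)$, classifying $\lambda$ by a single index $i$ that splits it into a smaller $r$-Lah distribution plus a controlled number of insertion steps whose weights are read off directly from the three cases used to prove the recurrence \eqref{rec1}.

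For \eqref{gt2e1}, I partition $\mathcal{L}_r(n,k)$ by the index $i \in \{k,k+1,\ldots,n\}$ such that $r+i$ is the largest among the minimum elements of the $k$ new (non-distinguished) blocks of $\lambda$. Equivalently, if the elements $r+1,r+2,\ldots,r+n$ are inserted in increasing order, $r+i$ is the last element to initiate a new singleton block. Restricting $\lambda$ to $[r] \cup \{r+1,\ldots,r+i-1\}$ gives a configuration in $\mathcal{L}_r(i-1,k-1)$ of total weight $\mathcal{G}_{a,b}(i-1,k-1;r)$; the element $r+i$ then creates a new singleton block and contributes weight $1$. For each subsequent $j \in \{i+1,\ldots,n\}$, element $r+j$ must either attach after one of the $j-1+r$ current members (weight $a$) or become the new leftmost of one of the $k+r$ current blocks (weight $b$), since no further new block may be created; this contributes a factor $a(j-1+r) + b(k+r) = a(j-1) + bk + (a+b)r$. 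Reindexing with $m = j-1$ yields the product $\prod_{m=i}^{n-1}(am + bk + (a+b)r)$, and summing over $i$ gives the right-hand side of \eqref{gt2e1}.

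For \eqref{gt2e2}, I partition $\mathcal{L}_r(n,k)$ by the number $i \in \{0,1,\ldots,k\}$ of trailing singletons of $\lambda$, defined as the largest integer such that each of $r+n-i+1, r+n-i+2,\ldots,r+n$ forms a singleton block by itself. Since $k < n$, the element $r+n-i$ always exists, and by maximality it is not itself a singleton in a new block. The $i$ trailing singletons contribute weight $1$ each. Removing them together with $r+n-i$ yields a configuration $\sigma \in \mathcal{L}_r(n-i-1,k-i)$ of total weight $\mathcal{G}_{a,b}(n-i-1,k-i;r)$, and re-inserting $r+n-i$ into $\sigma$ can occur in only two ways: either directly after one of the $n-i-1+r$ members of $\sigma$ (contributing $a$, since $r+n-i$ exceeds every element of $\sigma$ and is thus not a record low), or at the front of one of the $k-i+r$ blocks (contributing $b$, since $r+n-i$ is then a record low but not the block minimum). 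The third option of forming a new singleton is forbidden, as this would create an $(i+1)$st trailing singleton. The combined insertion weight is $a(n+r-i-1) + b(k+r-i)$, and summing over $i$ gives the right-hand side of \eqref{gt2e2}.

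The main subtlety I expect to verify carefully is the weight accounting when $r+n-i$ is placed at the front of an existing block of $\sigma$ in the argument for \eqref{gt2e2}: one must confirm that this alters $rec^*$ by exactly one (for $r+n-i$ itself) without disturbing the $nrec$ or $rec^*$ contributions of any other element, a point that depends on $r+n-i$ being strictly larger than every element of $\sigma$. An analogous observation underlies \eqref{gt2e1} when confirming that $r+i$ is automatically the minimum of the new singleton block it spawns, so that step contributes weight $1$ rather than $a$ or $b$.
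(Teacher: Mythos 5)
Your proposal is correct and follows essentially the same argument as the paper: for \eqref{gt2e1} it classifies distributions by the largest minimum $i+r$ among the non-distinguished blocks and counts insertions of the larger elements with weight $aj+bk+(a+b)r$ each, and for \eqref{gt2e2} it classifies by the largest element not forming a singleton block (equivalently, the number of trailing singletons), exactly as in the paper's proof. The weight-accounting subtleties you flag are handled the same way there, so no changes are needed.
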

\begin{proof}
To show \eqref{gt2e1}, we may assume that the blocks within an $r$-Lah distribution are arranged from left to right in ascending order according to the size of the smallest element.  Then the right-hand side of \eqref{gt2e1} gives the total weight of all members of $\mathcal{L}_r(n,k)$ by considering the smallest element, $i+r$, belonging to the right-most block where $k \leq i \leq n$.  Note that there are $\mathcal{G}_{a,b}(i-1,k-1;r)$ possibilities concerning placement of the members of $[i+r-1]$ and $\prod_{j=i}^{n-1}(aj+bk+(a+b)r)$ ways in which to arrange the members of $[i+r+1,n+r]$.  Summing over all possible $i$ gives \eqref{gt2e1}.

To show \eqref{gt2e2}, consider the largest element, $n+r-i$, not going by itself in a block where $0 \leq i \leq k$ (note $k<n$ implies the existence of such an element).  Observe that then the elements of $[n+r-i-1]$ comprise a member of $\mathcal{L}_r(n-i-1,k-i)$ and that there are $a(n+r-i-1)+b(k+r-i)$ possibilities concerning placement of $n+r-i$.  Finally, the members of $[n+r-i+1,n+r]$ must all belong to singleton blocks and hence contribute to neither the $nrec$ nor the $rec^*$ values.
\end{proof}

Extending the proofs of Theorems 3.4 and 3.6 in \cite{NR} yields the following identities.

\begin{theorem}\label{gt3}
If $0 \leq k \leq n$, then
\begin{equation}\label{gt3e1}
\mathcal{G}_{a,b}(n,k;r+s)=\sum_{i=k}^n \binom{n}{i}\mathcal{G}_{a,b}(i,k;r)\prod_{j=0}^{n-i-1}(aj+(a+b)s).
\end{equation}
If $0 \leq k \leq n-m$, then
\begin{equation}\label{gt3e2}
\binom{k+m}{k}\mathcal{G}_{a,b}(n,k+m;r+s)=\sum_{i=k}^{n-m}\binom{n}{i}\mathcal{G}_{a,b}(i,k;r)\mathcal{G}_{a,b}(n-i,m;s).
\end{equation}
\end{theorem}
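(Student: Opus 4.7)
The plan is to prove both identities by combinatorial double-counting, splitting the $r+s$ distinguished elements of an $(r+s)$-Lah distribution into a ``red'' group $[r]$ and a ``blue'' group $[r+1,r+s]$, and then decomposing the distribution according to how the non-distinguished elements (and, for \eqref{gt3e2}, the non-distinguished blocks) are associated with each group. The weights $w(\lambda)=a^{nrec(\lambda)}b^{rec^*(\lambda)}$ are blockwise statistics, so they factor cleanly across any partition of the blocks into two subsets, which is what makes this approach work.

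For \eqref{gt3e1}, I would interpret the left side as the weighted total over $\mathcal{L}_{r+s}(n,k)$, whose members consist of $r+s$ distinguished blocks (those containing members of $[r+s]$) together with $k$ non-distinguished blocks. Given such $\lambda$, let $i$ be the number of elements of $[r+s+1,n+r+s]$ that lie outside the $s$ blocks containing $[r+1,r+s]$; choosing which $i$ of the $n$ non-distinguished elements play this role contributes the factor $\binom{n}{i}$. Those $i$ elements together with $[r]$ and the $k$ non-distinguished blocks form an arbitrary member of $\mathcal{L}_r(i,k)$, of total weight $\mathcal{G}_{a,b}(i,k;r)$. The remaining $n-i$ non-distinguished elements are distributed among the $s$ blocks containing $[r+1,r+s]$, forming an arbitrary member of $\mathcal{L}_s(n-i,0)$, whose total weight is the boundary value
\[
\mathcal{G}_{a,b}(n-i,0;s)=\prod_{j=0}^{n-i-1}\bigl(a(j+s)+bs\bigr)=\prod_{j=0}^{n-i-1}\bigl(aj+(a+b)s\bigr).
\]
Since the statistics $nrec$ and $rec^*$ are computed blockwise, the weights multiply across the two parts, and summing over $k\le i\le n$ yields \eqref{gt3e1}.

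For \eqref{gt3e2}, I would use essentially the same construction but additionally allow the $k+m$ non-distinguished blocks to be labeled either red or blue. Starting from a pair consisting of a member $\mu\in\mathcal{L}_{r+s}(n,k+m)$ and a choice of which $k$ of its $k+m$ non-distinguished blocks are colored red (giving the factor $\binom{k+m}{k}$ on the left side), I would merge the $r$ blocks containing $[r]$ with the $k$ red non-distinguished blocks, and the $s$ blocks containing $[r+1,r+s]$ with the $m$ blue non-distinguished blocks. If $i$ is the number of non-distinguished elements landing in red blocks, then the red half is an arbitrary member of $\mathcal{L}_r(i,k)$ and the blue half is an arbitrary member of $\mathcal{L}_s(n-i,m)$; the factor $\binom{n}{i}$ records which $i$ non-distinguished elements go into red blocks. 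The blockwise factorization of the weight again justifies the product $\mathcal{G}_{a,b}(i,k;r)\mathcal{G}_{a,b}(n-i,m;s)$, and the range $k\le i\le n-m$ reflects the support of the generalized $r$-Lah numbers.

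I do not expect any real obstacle; the only points to be careful about are the identification of the boundary product with $\mathcal{G}_{a,b}(n-i,0;s)$ used in \eqref{gt3e1}, and the verification that assigning a red/blue label to each of the $k+m$ non-distinguished blocks in \eqref{gt3e2} is precisely what allows the two halves to have independent numbers of non-distinguished blocks $k$ and $m$. Neither identity requires induction, generating functions, or a sign-changing involution.
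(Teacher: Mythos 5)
Your proof is correct: the blockwise factorization of $w(\lambda)=a^{nrec(\lambda)}b^{rec^*(\lambda)}$ under order-preserving relabeling is exactly what justifies splitting an $(r+s)$-Lah distribution into its ``red'' and ``blue'' halves, and your identification $\mathcal{G}_{a,b}(n-i,0;s)=\prod_{j=0}^{n-i-1}(aj+(a+b)s)$ and the role of the $\binom{k+m}{k}$ block-coloring are both right. The paper gives no explicit proof here, merely stating that one extends the combinatorial arguments of Theorems 3.4 and 3.6 of \cite{NR}; your argument is precisely that extension, so it matches the intended approach.
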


The $a=0,b=1$ case of the following identity is a refinement of the $r$-Bell number relation \cite[Theorem 2]{Me}.

\begin{theorem}\label{gt4}
If $n,m,k \geq 0$, then
\begin{equation}\label{gt4e1}
\mathcal{G}_{a,b}(n+m,k;r)=\sum_{i=0}^n\sum_{j=0}^m \binom{n}{i}\mathcal{G}_{a,b}(m,j;r)\mathcal{G}_{a,b}(i,k-j;0)\prod_{\ell=0}^{n-i-1}(a\ell+a(m+r)+b(j+r)).
\end{equation}
\end{theorem}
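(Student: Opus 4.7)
The plan is to prove the identity combinatorially, showing that the right-hand side computes $\sum_\lambda w(\lambda)$ over $\lambda\in\mathcal{L}_r(n+m,k)$, where $w(\lambda)=a^{nrec(\lambda)}b^{rec^*(\lambda)}$. Call the elements of $[m+r]$ \emph{base} elements and those of $[m+r+1,\,n+m+r]$ \emph{extra} elements. For a given $\lambda$, classify a block as a \emph{base block} if it contains at least one base element and as a \emph{new block} otherwise, and let $j+r$ count the base blocks, $k-j$ count the new blocks, and $i$ count the extras that lie in new blocks.

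The factor $\mathcal{G}_{a,b}(m,j;r)$ in \eqref{gt4e1} will arise from the restriction of $\lambda$ to $[m+r]$, which is an $r$-Lah distribution with $j+r$ blocks; since every extra exceeds every base element, erasing the extras does not affect the record-low status of any base element, so summing over such restrictions recovers exactly $\mathcal{G}_{a,b}(m,j;r)$. The binomial coefficient $\binom{n}{i}$ accounts for the choice of which extras populate the new blocks, and the restriction of $\lambda$ to those $i$ chosen extras is an ordinary Lah distribution into $k-j$ blocks, contributing $\mathcal{G}_{a,b}(i,k-j;0)$ by the same preservation-of-statistics argument.

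To account for the remaining $n-i$ extras that go into base blocks, I would insert them one at a time in increasing order of label. At the moment of inserting the $\ell$-th such extra ($0\le\ell\le n-i-1$), the current partial structure carries $m+r+\ell$ elements spread across the $j+r$ base blocks. Because the inserted element is larger than everything present, placing it immediately after any one of those $m+r+\ell$ elements contributes $a$ to the weight (it becomes a non-record-low), while placing it at the very start of any of the $j+r$ base blocks contributes $b$ (it is a record low but not the minimum of its block). This gives a per-step contribution of $a(m+r+\ell)+b(j+r)=a\ell+a(m+r)+b(j+r)$, whose product over $\ell$ recovers the final factor in \eqref{gt4e1}. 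Summing over $i$ and $j$ then yields the identity, and the main obstacle is simply to confirm that the three weight contributions---base restriction, new-block Lah distribution, and sequential insertions into base blocks---combine disjointly into $w(\lambda)$; this follows because each element's record-low status is determined solely by the smaller elements to its left, so neither erasing larger elements nor inserting them later can alter an already-established statistic.
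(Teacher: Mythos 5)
Your proposal is correct and follows essentially the same decomposition as the paper's proof: condition on the number $j+r$ of blocks occupied by $[m+r]$ and the number $n-i$ of larger elements joining those blocks, with the factors $\mathcal{G}_{a,b}(m,j;r)$, $\binom{n}{i}$, $\mathcal{G}_{a,b}(i,k-j;0)$, and the product accounted for exactly as in the paper. Your sequential-insertion justification of the product $\prod_{\ell=0}^{n-i-1}(a\ell+a(m+r)+b(j+r))$ and the check that the statistics decompose are just a more detailed spelling-out of the paper's counting.
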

\begin{proof}
Given $\lambda \in \mathcal{L}_r(n+m,k)$, consider the number, $n-i$, of elements in $I=[m+r+1,n+m+r]$ that lie in a block containing an element of $[m+r]$ and the number, $j+r$, of blocks occupied by the members of $[m+r]$.  There are then $\mathcal{G}_{a,b}(m,j;r)$ possibilities regarding placement of the members of $[m+r]$. Once these positions have been determined, there are $\binom{n}{i}\prod_{\ell=0}^{n-i-1}(a\ell+a(m+r)+b(j+r))$ ways in which to choose and arrange the aforementioned elements of $I$.  Finally, the remaining elements of $I$ can be arranged in $\mathcal{G}_{a,b}(i,k-j;0)$ ways as none of them can belong to distinguished blocks.  Summing over all possible $i$ and $j$ gives \eqref{gt4e1}.
\end{proof}

If $n \geq 0$, then let $G_{a,b}(n;r)=\sum_{k=0}^n G_{a,b}(n,k;r)$.  Note that $G_{a,b}(n;0)$ reduces to the Bell number \cite[A000110]{Sl} when $a=0,b=1$ and to the sequence \cite[A000262]{Sl} when $a=b=1$.  Summing \eqref{gt3e1} and \eqref{gt4e1} over $k$ gives, respectively, the formulas
\begin{equation}\label{Bne1}
\mathcal{G}_{a,b}(n;r+s)=\sum_{i=0}^n \binom{n}{i}\mathcal{G}_{a,b}(i;r)\prod_{j=0}^{n-i-1}(aj+(a+b)s)
\end{equation}
and
\begin{equation}\label{Bne2}
\mathcal{G}_{a,b}(n+m;r)=\sum_{i=0}^n\sum_{j=0}^m \binom{n}{i}\mathcal{G}_{a,b}(m,j;r)\mathcal{G}_{a,b}(i;0)\prod_{\ell=0}^{n-i-1}(a\ell+a(m+r)+b(j+r)).
\end{equation}

Note that the $a=0,b=1,s=1$ case of \eqref{Bne1} occurs as \cite[Theorem 7.1]{Me0}, see also \cite[Theorem 1]{Mi}; moreover, the $a=0,b=1$ case of \eqref{Bne2} occurs as \cite[Theorem 2]{Me}.

We have the following additional recurrences satisfied by the $\mathcal{G}_{a,b}(n;r)$.

\begin{theorem}\label{gt5}
If $n \geq 0$, then
\begin{equation}\label{gt5e1}
\mathcal{G}_{a,b}(n;r)=\sum_{i=0}^n \binom{n}{i}\mathcal{G}_{a,b}(n-i;0)\prod_{j=0}^{i-1}(aj+(a+b)r)
\end{equation}
and
\begin{align}
\mathcal{G}_{a,b}(n+1;r)&=r\sum_{i=0}^n \binom{n}{i} \mathcal{G}_{a,b}(n-i;r-1)\prod_{j=0}^{i}(aj+a+b)\notag\\
&\quad +\sum_{i=0}^n \binom{n}{i}\mathcal{G}_{a,b}(n-i;r)\prod_{j=0}^{i-1}(aj+a+b).\label{gt5e2}
\end{align}
\end{theorem}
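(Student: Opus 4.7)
The plan is to prove both identities combinatorially by decomposing each weighted Lah distribution on the left-hand side according to how the non-distinguished elements relate to a small privileged part of the configuration, and then evaluating the weight of that privileged part by sequential insertion, the same mechanism underlying \eqref{rec1}.

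For \eqref{gt5e1}, I would condition on the number $i$ of elements of $[r+1,n+r]$ lying in one of the $r$ distinguished blocks of $\lambda\in\bigcup_k\mathcal{L}_r(n,k)$. There are $\binom{n}{i}$ ways to choose these; the remaining $n-i$ elements occupy non-distinguished blocks without constraint, contributing $\mathcal{G}_{a,b}(n-i;0)$. To weight the distribution of the $i$ chosen elements into the $r$ blocks already containing $[r]$, insert them one at a time in increasing order of value. Before the $(j+1)$-st insertion the $r$ blocks hold $r+j$ elements, and the new (largest) element either starts one of the $r$ blocks (weight $b$, since it is a record low but not the smallest, the distinguished element of that block being smaller) or lands immediately after one of the $r+j$ existing entries (weight $a$). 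The per-step factor is $br+a(r+j)=aj+(a+b)r$, and the product over $j=0,\ldots,i-1$ recovers the product appearing in \eqref{gt5e1}.

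For \eqref{gt5e2}, I would condition on the block of $\lambda\in\bigcup_k\mathcal{L}_r(n+1,k)$ containing the smallest non-distinguished element $r+1$. If this block is non-distinguished, let $i$ count the other elements of $[r+2,n+r+1]$ in it: choose them in $\binom{n}{i}$ ways, let the remaining $n-i$ non-distinguished elements together with $[r]$ form an element of $\mathcal{L}_r(n-i,\cdot)$ of weight $\mathcal{G}_{a,b}(n-i;r)$, and insert the $i$ chosen elements in increasing order into a block initially holding only $r+1$; the per-step factors are $b+a(j+1)=aj+a+b$ for $j=0,\ldots,i-1$, producing the second sum. If instead $r+1$ lies in a distinguished block, there are $r$ choices of which distinguished element $d$ it joins, $\binom{n}{i}$ ways to choose the other $i$ elements in $d$'s block, and $\mathcal{G}_{a,b}(n-i;r-1)$ for the resulting $(r-1)$-Lah distribution on $[r]\setminus\{d\}$ and the other $n-i$ non-distinguished elements. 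The $d$-block weight is $(a+b)\prod_{j=1}^{i}(a(j+1)+b)=\prod_{j=0}^{i}(aj+a+b)$, with the factor $a+b$ coming from inserting $r+1$ next to $d$ and the remaining product tracking the $i$ subsequent insertions into a block whose size is one greater than in the previous subcase; this is the first sum.

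The only real obstacle is keeping the insertion bookkeeping honest: at each step the new element (which is the largest currently present, by the increasing-order convention) must be seen to contribute $b$ exactly when placed at the leftmost position of its block (where it is a record low but not the smallest, since a smaller distinguished or previously inserted element is always present), and $a$ otherwise. Once this dichotomy is verified in each subcase, the factor patterns $aj+(a+b)r$ and $aj+a+b$, together with the one-index shift separating the two cases of \eqref{gt5e2}, all emerge by direct counting.
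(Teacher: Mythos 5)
Your proof is correct and follows essentially the same route as the paper: identity \eqref{gt5e1} is argued identically (conditioning on the $i$ elements of $[r+1,r+n]$ placed in distinguished blocks, with the sequential-insertion weight $\prod_{j=0}^{i-1}(aj+(a+b)r)$), and for \eqref{gt5e2} you condition on the block containing the smallest non-distinguished element $r+1$ where the paper uses the largest, $n+r+1$, a distinction that changes nothing in the case split or the weight computations. Your explicit verification that each inserted element contributes $b$ exactly when placed first in a block already containing a smaller element, and $a$ otherwise, is a correct spelling-out of the mechanism the paper leaves implicit by reference to \eqref{rec1}.
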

\begin{proof}
To show \eqref{gt5e1}, consider the number, $i$, of elements in $[r+1,r+n]$ that belong to distinguished blocks within $\mathcal{L}_r(n)=\cup_{k=0}^n \mathcal{L}_r(n,k)$.  Note that there are $\binom{n}{i}$ ways to select these elements and $\prod_{j=0}^{i-1}(aj+(a+b)r)$ ways in which to arrange them, once selected, within the distinguished blocks. (Note that the $j$-th smallest element chosen is the $j$-th to be arranged and thus contributes $a(j-1)+(a+b)r$ for $1 \leq j \leq i$.)  The remaining $n-i$ elements of $[r+1,r+n]$ may then be partitioned in $\mathcal{G}_{a,b}(n-i;0)$ ways.  Summing over all possible $i$ gives \eqref{gt5e1}.

To show \eqref{gt5e2}, we consider whether or not the element $n+r+1$ belongs to a distinguished block within a member of $\mathcal{L}_r(n+1)$.  If it does, then there are $r$ choices for the block, which we will denote by $B$.  If there are $i$ other elements of $[r+1,r+n+1]$ in $B$, then there are $\binom{n}{i}$ ways in which to select these elements and $\prod_{\ell=0}^i(a\ell+a+b)$ ways in which to arrange all $i+2$ elements within $B$.  The remaining $n-i$ elements of $[r+1,r+n+1]$ and the other $r-1$ elements of $[r]$ can then be arranged together according to any member of $\mathcal{L}_{r-1}(n-i)$.  Thus, the first sum on the right-hand side of \eqref{gt5e2} gives the weight of all members of $\mathcal{L}_r(n+1)$ in which $n+r+1$ belongs to a distinguished block.  By similar reasoning, the second sum gives the weight of all members of $\mathcal{L}_r(n+1)$ in which $n+r+1$ belongs to a non-distinguished block according to the number $i$ of other elements in this block.
\end{proof}

Taking $a=0,b=1$ in \eqref{gt5e1} gives
$$B_{n,r}=\sum_{i=0}^n r^i\binom{n}{i}B_{n-i}, \qquad n \geq 0,$$
which is equivalent to the $x=1$ case of \cite[Equation 4]{Me0}, where $B_{n,r}=\sum_{k=0}^n \Stirling{n}{k}_r$ denotes the $r$-Bell number and $B_n$ denotes the usual Bell number.

Taking $a=0,b=1$ in \eqref{gt5e2}, and applying \eqref{Bne1} when $s=1$ to both sums, gives
$$B_{n+1,r}=rB_{n,r}+B_{n,r+1}, \qquad n \geq 0,$$
which is \cite[Theorem 8.1]{Me0}.

\textbf{Remark:}  Adding a variable $x$ that marks the number of non-distinguished blocks within members of $\mathcal{L}_r(n)$, identity \eqref{gt5e2} can be generalized to
\begin{align*}
G_{a,b,x}(n+1;r)&=r\sum_{i=0}^n \binom{n}{i} \mathcal{G}_{a,b,x}(n-i;r-1)\prod_{j=0}^{i}(aj+a+b)\\
&\quad +x\sum_{i=0}^n \binom{n}{i}\mathcal{G}_{a,b,x}(n-i;r)\prod_{j=0}^{i-1}(aj+a+b),
\end{align*}
which reduces to \cite[Theorem 4.2]{Me0} when $a=0,b=1$.  The other identities above for $\mathcal{G}_{a,b}(n;r)$ can also be similarly generalized.

\section{Combinatorial proofs of $r$-Lah formulas}

In this section, we provide combinatorial proofs of the following relations involving the $r$-Lah numbers which were given in \cite[Theorem 3.11]{NR}.

\begin{theorem} Let $0 \leq k \leq n$ and $r,s \geq 0$.  Then
\begin{align*}
(i)& \quad\binom{n}{k}(2r-2s)^{\overline{n-k}}=\sum_{j=k}^n (-1)^{j-k}\Lah{n}{j}_r \Lah{j}{k}_s,\\
(ii)& \quad \stirling{n}{k}_{2r-s}=\sum_{j=k}^n (-1)^{j-k}\Lah{n}{j}_r \stirling{j}{k}_s, \quad \text{if} \quad 2r \geq s,\\
(iii)& \quad \Stirling{n}{k}_{2s-r}=\sum_{j=k}^n (-1)^{n-j} \Stirling{n}{j}_r \Lah{j}{k}_s, \quad \text{if} \quad 2s \geq r,\\
(iv)& \quad \Lah{n}{k}_{\frac{r+s}{2}}=\sum_{j=k}^n \stirling{n}{j}_r \Stirling{j}{k}_s, \quad \text{if r and s have the same parity}.
\end{align*}
\end{theorem}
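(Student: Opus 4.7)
Since identities (i), (ii), (iii) carry alternating signs while (iv) is sign-free, I would handle the two groups separately, following the author's stated recipe: sign-reversing involutions for the first three and a direct bijection for the last.

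For each of (i)--(iii), I would interpret the summand $\Lah{n}{j}_r$ times the second factor as parametrizing ordered pairs $(\lambda,\mu)$, where $\lambda\in\mathcal{L}_r(n,j)$ and $\mu$ is the appropriate $s$-indexed structure--another Lah distribution for (i), a cycle structure for (ii), or a set partition for (iii)--built on the set consisting of the $j$ non-distinguished blocks of $\lambda$ together with $s$ new distinguished tokens. I would then define a sign-reversing involution by locating, in each pair, a canonical trigger, most naturally the smallest element of $[n+r]$ whose local role in $\lambda$ can be transferred to $\mu$, or vice versa, through a block split or merge. Applying the move changes $j$ by one and flips the sign $(-1)^{j-k}$ or $(-1)^{n-j}$. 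The fixed points are what must be identified with the configurations counted by the respective left-hand side: for (i), a choice of $k$-subset of $[n]$ together with an insertion sequence for the remaining $n-k$ elements in which the $i$-th step offers $2(r-s)+i-1$ admissible positions, the factor $2r$ arising from insertion slots next to the $r$ ordered distinguished blocks of $\lambda$ and the $-2s$ arising from the $s$ slots absorbed by $\mu$; for (ii) and (iii), analogous interpretations in which the ordering within non-distinguished blocks on the appropriate side is collapsed.

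For identity (iv), set $t=(r+s)/2$, which is an integer by the parity hypothesis. I would construct a direct bijection between pairs $(\sigma,\pi)$, where $\sigma$ is a permutation of $[n+r]$ into $j+r$ cycles with $[r]$ in distinct cycles and $\pi$ is a set partition of $[j+s]$ into $k+s$ blocks with $[s]$ in distinct blocks, and the $r$-Lah distributions in $\mathcal{L}_t(n,k)$. Each non-distinguished cycle of $\sigma$ is read as an ordered chain (start at its minimum, then follow the cycle forward); the chains are then grouped according to the blocks of $\pi$ to form the ordered blocks of a Lah distribution. The $r$ distinguished cycles of $\sigma$ together with the $s$ distinguished placeholders of $\pi$ jointly generate the $t$ distinguished blocks of the output: since $r+s=2t$ is even, I would fix once and for all a canonical pairing of these $r+s$ distinguished labels into $t$ pairs, and declare each pair to seed one distinguished Lah block (containing the chain from the distinguished cycle in the pair, if any). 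Invertibility follows by reversing the chain-to-cycle step within each block and by reading off $\pi$ from the block structure.

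The main obstacle I expect to meet is the bijection for (iv) when $r\neq s$: the absence of a natural one-to-one correspondence between the $r$ distinguished cycles and the $s$ distinguished placeholders forces some internal pairing within one of the two sets, and showing that the resulting scheme is canonical and invertible requires care. For the involutions in (i)--(iii), the main subtlety is engineering the trigger so that the fixed-point set is \emph{exactly} what the left-hand side counts, with the factor $2(r-s)$ in (i) emerging as the net balance of insertion slots that survive cancellation between the two halves of the pair.
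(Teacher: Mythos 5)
Your outline reproduces the paper's announced strategy (involutions for (i)--(iii), a bijection for (iv)), and your identification of the fixed points for (i) --- a $k$-subset of $[n]$ plus sequential insertion of the other $n-k$ elements with $2(r-s)+i-1$ slots at step $i$ --- matches the paper exactly: the surviving pairs are those in which every block of $\beta$ holds a single singleton block of $\alpha$, so the remaining elements live in the $r-s$ ordered blocks of $\alpha$ containing $[s+1,r]$, and a configuration of total length $(r-s)+(i-1)$ spread over $r-s$ linear orders offers precisely $2(r-s)+i-1$ gaps. However, the proposal stops short of the actual constructions at every point where the proof has content. The involution itself is only gestured at; the paper's move is concrete: locate the left-most block $B$ of $\beta$ (blocks ordered by smallest element) containing at least two elements of $[n+r]$, and either detach the first element of the first inner block of $B$ as a new singleton, or, if that first inner block is already a singleton, absorb it into the head of the following inner block. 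Checking that this toggles $j$ by one and has exactly the claimed fixed-point set is the proof. More seriously, you do not address the sign and parameter ranges: when $r<s$ in (i) the quantity $(2r-2s)^{\overline{n-k}}$ has a negative base, and the paper must rewrite the identity with a falling factorial $(2s-2r)^{\underline{n-k}}$ and sign $(-1)^{n-j}$, then introduce $s-r$ special singleton blocks $\{-1\},\ldots,\{-(s-r)\}$ into $\beta$ with a two-sided capacity condition on the special blocks to get the falling-factorial count. Analogous case splits ($r>s$ in (ii); $r<s$ and $s<r\leq 2s$ in (iii)) each need their own auxiliary objects and extended involutions; none of this is present in your sketch.

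For (iv) you have correctly isolated the crux --- how the $r$ distinguished cycles and $s$ distinguished placeholders produce $\frac{r+s}{2}$ distinguished ordered blocks --- but ``fix once and for all a canonical pairing of these $r+s$ labels into $t$ pairs'' will not work as stated, because the merge must be performed so that the image block determines its two constituents. The paper's resolution (case $r\geq s$) is asymmetric: for $i\in[s]$, the entire block of $\beta$ containing the placeholder $(-i)$ --- including all non-distinguished cycles of $\alpha$ that happen to lie in it, concatenated in decreasing order of smallest elements --- is prepended to cycle $C_i$; then the leftover $r-s$ distinguished cycles are paired internally, with $C_j$ for $\frac{r+s}{2}<j\leq r$ contributing only its non-initial letters $W_j$ (the label $j$ is discarded) as a prefix of $C_{j-\frac{r-s}{2}}$. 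Invertibility then rests on reading left-to-right minima to recover cycle boundaries and on the fact that everything left of the surviving distinguished label came from the dropped cycle. Your sketch omits both the absorption of non-distinguished cycles sitting in special blocks and the label-dropping device, and it is exactly these that make the map well defined and reversible; the $r<s$ case requires yet another arrangement. So the proposal is a plausible plan rather than a proof: the architecture is right, but the split/merge rules, the negative-parameter bookkeeping, and the pairing mechanism in (iv) --- which you yourself flag as the obstacle --- are the substance of the argument and are missing.
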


\textbf{Proof of (i):}\\

First suppose $r \geq s$.  Consider the set of ordered pairs $(\alpha,\beta)$, where $\alpha \in \mathcal{L}_r(n,j)$ for some $k \leq j \leq n$ and $\beta$ is an arrangement of the $j+s$ blocks of $\alpha$ not containing the elements of $[s+1,r]$ according to some member of $\mathcal{L}_s(j,k)$. Note that within $\beta$, the blocks of $\alpha$ are ordered according to the size of the smallest elements.  Let $\mathcal{A}$ ($=\mathcal{A}_{n,k}$) denote the set of all such ordered pairs $(\alpha,\beta)$.  Define the sign of $(\alpha,\beta) \in \mathcal{A}$ by $(-1)^{j-k}$, where $j$ denotes the number of non-distinguished blocks of $\alpha$.  Then the right-hand side of (i) gives the sum of the signs of all members of $\mathcal{A}$.

Let $\mathcal{A}^*\subseteq \mathcal{A}$ comprise those pairs in which each block of $\beta$ contains only one block of $\alpha$, with this block being a singleton.  Then each member of $\mathcal{A}^*$ has positive sign and $|\mathcal{A}^*|=\binom{n}{k}(2r-2s)^{\overline{n-k}}$.  To show the latter statement, first note that the blocks of $\beta$ for each $(\alpha,\beta) \in \mathcal{A}^*$ contain $k$ elements of $[r+1,r+n]$, together with the members of $[s]$.  Thus, there are $\binom{n}{k}$ choices concerning the elements of $[r+1,r+n]$ to go in these blocks. The remaining $n-k$ elements of $[r+1,r+n]$ then belong to the blocks of $\alpha$ containing the members of $[s+1,r]$.  Note that these $n-k$ elements may be positioned in any one of $(2r-2s)^{\overline{n-k}}$ ways amongst these blocks, as there are $2r-2s+i-1$ ways to position the $i$-th smallest element for $1 \leq i \leq n-k$ (upon selecting the position first for the smallest element and then for the second smallest and so on). This implies the cardinality formula for $|\mathcal{A}^*|$ above.

We now define a sign-changing involution of $\mathcal{A}-\mathcal{A}^*$, which will complete the proof for the case $r \geq s$.  To do so, given $(\alpha,\beta) \in \mathcal{A}-\mathcal{A}^*$, suppose that the blocks of $\beta$ are arranged from left to right in increasing order according to the size of the smallest element of $[n+r]$ contained therein.  Identify the left-most block of $\beta$ containing at least two elements of $[n+r]$ altogether, which we will denote by $B$.  If the first block of $\alpha$ within $B$ is a singleton, whence $B$ contains at least two blocks of $\alpha$, then erase brackets and move the element contained therein to the initial position of the block that follows.  If the first block of $\alpha$ within $B$ contains at least two elements of $[n+r]$, then form a singleton block using the initial element which then becomes the first block within the sequence of blocks comprising $B$. One may verify that this mapping provides the desired involution, which completes the proof in the case when $r \geq s$.

If $r<s$, then we show equivalently
\begin{equation}\label{Lahe1}
\binom{n}{k}(2s-2r)^{\underline{n-k}}=\sum_{j=k}^n (-1)^{n-j}\Lah{n}{j}_r \Lah{j}{k}_s.
\end{equation}
In this case, we define ordered pairs $(\alpha,\beta) \in \mathcal{A}$, where $\alpha$ is as before and $\beta$ is an arrangement of all of the blocks of $\alpha$, together with $s-r$ singleton blocks $\{-1\}, \{-2\}, \ldots, \{-(s-r)\}$ (which we will refer to as \emph{special}), according to some member of $\mathcal{L}_s(j,k)$.  Note that the distinguished blocks of $\alpha$, together with the special blocks, are to be regarded as distinguished elements within $\beta$ (with similar terminology applied to the blocks of $\beta$).  Furthermore, let us refer to the blocks of $\beta$ containing $\{-i\}$ for some $i$ as \emph{special} and the other blocks of $\beta$ as \emph{non-special}.  Define the sign by $(-1)^{n-j}$, where $j$ is the number of non-distinguished blocks of $\alpha$.

Let $\mathcal{A}^*$ consist of all ordered pairs $(\alpha,\beta)$ in which all blocks of $\alpha$ are singletons and are distributed within $\beta$  such that the non-special blocks of $\beta$ contain only one block of $\alpha$, while the special blocks of $\beta$ have (i) at most one block of $\alpha$ to the right of the special singleton contained therein, and (ii) at most one to the left of it.  Then each member of $\mathcal{A}^*$ has positive sign and $|\mathcal{A}^*|=\binom{n}{k}(2s-2r)^{\underline{n-k}}$.  This follows from first observing that there are $\binom{n}{k}$ ways in which to choose and arrange the elements of $[r+1,r+n]$ that are not to be contained within the special blocks of $\beta$.  It is then seen that there are $(2s-2r)^{\underline{n-k}}$ ways in which to arrange the remaining elements of $[r+1,r+n]$ in the special blocks of $\beta$ according to the restrictions above.

To define the sign-changing involution of $\mathcal{A}-\mathcal{A}^*$, first apply the mapping used in the prior case if some non-special block of $\beta$ contains two or more elements of $[n+r]$ altogether.  Otherwise, identify the smallest $i \in [r-s]$, which we will denote by $i_0$, such that the block of $\beta$ containing $\{-i\}$ violates condition (i) or (ii).  Apply the involution used in the prior case to the blocks of $\alpha$ to the left of $\{-i_0\}$ within its block in $\beta$ if (i) is violated, and if not, then apply this mapping to the blocks of $\alpha$ occurring to the right of $\{-i_0\}$.  Combining the two mappings yields the desired involution of $\mathcal{A}-\mathcal{A}^*$ and completes the proof in the case when $r<s$. \qed \hfill\\

\textbf{Proof of (ii):}\\

In what follows, let $\mathcal{C}_r(n,k)$ denote the subset of $\mathcal{L}_r(n,k)$ enumerated by $\stirling{n}{k}_r$, i.e., those distributions in which the smallest element is first within each block.  First assume $r=s$.  In this case, let $\mathcal{B}$ ($=\mathcal{B}_{n,k}$) denote the set of ordered pairs $(\gamma,\delta)$ such that $\gamma \in \mathcal{L}_r(n,j)$ for some $k \leq j \leq n$ and $\delta$ is an arrangement of all the blocks of $\gamma$ according to some member of $\mathcal{C}_r(j,k)$.  Here, it is understood that the blocks of $\gamma$ are ordered according to the size of the smallest element, with the distinguished blocks of $\gamma$ considered distinguished as elements of $\delta$.  Furthermore, the first block of $\gamma$ within each cycle of $\delta$ is the smallest (i.e., it contains the smallest element of $[n+r]$ contained within all of the blocks in the cycle).  Define the sign of $(\gamma,\delta)$ as $(-1)^{j-k}$, where $j$ denotes the number of non-distinguished blocks of $\gamma$.  Then the right-hand side of (ii) when $r=s$ is the sum of the signs of all members of $\mathcal{B}$.

Let $\mathcal{B}^* \subseteq \mathcal{B}$ comprise those pairs $(\gamma, \delta) \in \mathcal{B}$ satisfying the conditions (i) within each block of $\gamma$, the first element is smallest, and (ii) no block of $\delta$ contains two or more blocks of $\gamma$.  Note that members of $\mathcal{B}^*$ must contain $k$ non-distinguished blocks for otherwise (ii) would be violated, whence each member of $\mathcal{B}^*$ has positive sign.  Furthermore, members of $\mathcal{B}^*$ are seen to be synonymous with members of $\mathcal{C}_r(n,k)$.

We define a sign-changing involution of $\mathcal{B}-\mathcal{B}^*$ as follows. Suppose that the cycles of $\delta$ within $(\gamma,\delta) \in \mathcal{B}-\mathcal{B}^*$ are arranged in increasing order according to the size of the smallest element of $[n+r]$ contained therein.   Let $B$ be the left-most cycle of $\delta$ that either contains at least two blocks of $\delta$ or contains a block of $\delta$ in which the first element fails to be the smallest.  Consider the first block $x$ within $B$.  If $x$ is of the form $\{a,\ldots,b,\ldots\}$, where $b$ is the smallest element of the block and $b \neq a$, then replace it within $B$ with the two blocks $\{b,\ldots\},\{a,\ldots\}$.  On the other hand, if the first element of $x$ is the smallest, whence $B$ contains at least two blocks, then write all elements of $x$ in order at the end of the second block of $B$.  This mapping provides the desired involution and establishes the result in the case $r=s$.

Now suppose $r<s\leq 2r$ and write $s=2r-\ell$ for some $0 \leq \ell \leq r-1$.  We modify the proof given in the prior case as follows.  Let $\mathcal{B}$ consist of all ordered pairs $(\gamma,\delta)$, where $\gamma \in \mathcal{L}_r(n,j)$ and $\delta$ is an arrangement of all the blocks of $\gamma$, together with the special singletons $\{-i\}$ for $i \in [r-\ell]$, arranged according to some member of $\mathcal{C}_s(j,k)$.  Blocks are ordered according to the size of the smallest elements contained therein and the cycles of $\delta$ are arranged as before.  The distinguished elements of $\delta$ are the distinguished blocks of $\gamma$, together with the special singletons.  Let $\mathcal{B}^*\subseteq \mathcal{B}$ consist of those pairs satisfying conditions (i) and (ii) above, where for (ii), we exclude from consideration cycles of $\delta$ containing $\{-i\}$ for some $i$.  Given $(\gamma,\delta) \in \mathcal{B}-\mathcal{B}^*$, apply the prior involution to the cycles of $\delta$ not containing the special singletons.

Let $\mathcal{B}'\subseteq \mathcal{B}^*$ consist of those $(\gamma,\delta)$ in which the numbers $\pm i$ for $i \in [r-\ell]$ all belong to singleton blocks of $\gamma$ each occupying its own cycle of $\delta$.  Note that $|\mathcal{B}'|=\stirling{n}{k}_{2r-s}$ since there are $\ell=2r-s$ distinguished cycles (i.e., those containing a block with an element of $[r-\ell+1,r]$
in it), with all cycles containing a single contents-ordered block whose first element is also the smallest.  To complete the proof in this case, we extend the involution to $\mathcal{B}^*-\mathcal{B}'$ as follows.  Let $i_0$ be the smallest $i \in [r-\ell]$ such that either (a) a cycle of $\delta$ containing $\{-i\}$ also has one or more elements of $[r+1,r+n]$ in it, or (b) a cycle of $\delta$ containing $\{-i\}$ has only that block in it, with $i$ not occurring as a singleton block of $\gamma$.

If (a) occurs and there are at least two elements of $[r+1,r+n]$ altogether in the cycle of $\delta$ containing $\{-i_0\}$, then apply the mapping used in the proof of the $r\geq s$ case of (i) above to the blocks of this cycle excluding $\{-i_0\}$.  Otherwise, if (a) occurs and there is only one element of $[r+1,r+n]$ in the cycle containing $\{-i_0\}$ or if (b) occurs, then replace one option with the other by either moving the element in the other block of the cycle containing $\{-i_0\}$ to the last position of the block containing $i_0$ within its cycle or vice-versa.   Combining the last two mappings provides the desired involution of $\mathcal{B}^*-\mathcal{B}'$ and completes the proof in the $r<s \leq 2r$ case.

Finally, if $r>s$, then consider ordered pairs $(\gamma,\delta)$, where $\delta$ is an arrangement in cycles of all the non-distinguished blocks of $\gamma$, together with those containing $i$ for some $i \in [s]$.  Apply the involution used in the $r=s$ case above, but this time excluding from consideration those blocks of $\gamma$ containing $i$ for some $i \in [s+1,r]$.  The set of survivors $(\gamma,\delta)$ of this involution then consists of all $(\gamma,\delta)$ where any block of $\gamma$ (other than one containing some $i \in [s+1,r]$) has its smallest element first, with the cycles of $\delta$ each containing one block of $\gamma$.  We add $r-s$ to all of the elements in $[r+1,r+n]$. Then to a block of $\gamma$ containing $i \in [s+1,r]$, we write $i+r-s$ at the front of it. For each $i$, split the block now containing $i+r-s$ and $i$ into two separate blocks starting with these elements.  Designate all blocks starting with $i \in [2r-s]$ as distinguished.  From this, it is seen that the set of survivors of the involution in this case are synonymous with members of $\mathcal{C}_{2r-s}(n,k)$, which completes the proof.  \qed \\

\textbf{Proof of (iii):}\\

One can give a similar proof to (ii) above.  We describe the main steps.  Let $\mathcal{S}_r(n,k)$ denote the subset of $\mathcal{L}_r(n,k)$ enumerated by $\Stirling{n}{k}_r$, i.e., those distributions in which the elements occur in increasing order within each block.  In the case $r=s$, consider the set $\mathcal{D}$ of ordered pairs $(\rho,\tau)$ such that $\rho \in \mathcal{S}_r(n,j)$ for some $j$ and $\tau$ is an arrangement of the blocks of $\rho$ according to some member of $\mathcal{L}_r(j,k)$. Define an involution of $\mathcal{D}$ by considering the first block of $\tau$ containing a non-singleton block of $\rho$ or in which the blocks of $\rho$ are not arranged in increasing order of smallest elements (possibly both).  Within this block of $\tau$, in a left-to-right scan of the blocks of $\rho$ contained therein, consider the first occurrence of either (i)  consecutive blocks of the form $B=\{x\}$, $C=\{y,\ldots\}$, where $x > \max(C)$, or (ii) $C=\{y,\ldots\}$, where $|C|\geq 2$ and the block directly preceding $C$ (if it exists) contains a single element $x$ that is strictly smaller than $\max(C)$.  We replace one option with the other by either moving the element in $B$ to the end of block $C$ in (i) or taking the last element of block $C$ as in (ii) and forming a singleton that directly precedes it.

If $r<s$, then add $s-r$ special singleton blocks to the arrangement $\tau$ to be regarded as distinguished.  Apply the involution used in the previous case to the blocks of $\tau$ not containing a special singleton.  To the blocks of $\tau$ containing a special singleton, we apply the involution separately to the sections to the left and to the right of it.  Given a survivor of this involution, we break the blocks of $\tau$ into two sections with the special singleton starting the second section and then add a distinguished element to the first section.  Note that this results in $r+2(s-r)=2s-r$ distinguished blocks in all.

If $s<r\leq2s$,  then consider ordered pairs $(\rho,\tau)$, where $\tau$ consists of contents-ordered blocks whose elements are the non-distinguished blocks of $\rho$, together with the first $s$ distinguished blocks of $\rho$.  Apply the involution used in the $r=s$ case, excluding from consideration those blocks of $\rho$ containing a member of $[s+1,r]$.  We extend this involution by considering the smallest $i \in [r-s]$, if it exists, such that there is at least one member of $[r+1,r+n]$ in either the block of $\tau$ containing $i$ or in the block of $\rho$ containing $r+1-i$ (possibly both).  Let $M$ denote the largest element of $[r+1,r+n]$ contained in either of these blocks.  If $M$ belongs to the block of $\tau$ containing $i$, necessarily as a singleton $\{M\}$, then erase the brackets enclosing $M$ and move it to the final position of the block of $\rho$ containing $r+1-i$, and vice-versa, if $M$ belongs to the block of $\rho$ containing $r+1-i$.  The set of survivors of this extended involution is seen to have cardinality $\Stirling{n}{k}_{2s-r}$.  \qed\\

\textbf{Proof of (iv):}\\

Suppose $r$ and $s$ have the same parity.  First assume $r \geq s$.  Let $\mathcal{E}$ denote the set of ordered pairs $(\alpha,\beta)$ such that $\alpha \in \mathcal{C}_r(n,j)$ for some $k \leq j \leq n$ and $\beta$ is an arrangement of the $j$ non-distinguished cycles of $\alpha$, together with $s$ special singleton cycles $(-1), (-2),\ldots, (-s)$ into $k+s$ blocks according to some member of $\mathcal{S}_s(j,k)$.  Then the right-hand side of (iv) gives the cardinality of $\mathcal{E}$.  To complete the proof in this case, we define a bijection between the sets $\mathcal{E}$ and $\mathcal{L}_{\frac{r+s}{2}}(n,k)$.

To do so, given $(\alpha,\beta) \in \mathcal{E}$, let $C_i$, $1 \leq i \leq r$, denote the cycles of $\alpha$ containing the members of $[r]$.  We assume that the smallest element is written first within a cycle.  If $1 \leq i \leq s$, then let $C_1^{(i)},C_2^{(i)},\ldots,C_{t_i}^{(i)}$ for some $t_i \geq 0$ denote the other cycles (if any) within the block of $\beta$ containing $(-i)$, arranged in decreasing order of smallest elements.  For each $i$, we remove the parentheses enclosing these cycles and concatenate the resulting words into one long word, which we write to the left of the elements in cycle $C_i$ in a single block.  (If there are no other cycles in the block of $\beta$ containing $(-i)$, then only the elements in cycle $C_i$ are written.)  This yields $s$ contents-ordered blocks containing the distinguished elements $1,\ldots,s$.

For $\frac{r+s}{2}<j \leq r$, consider the word $W_j$ obtained by reading the contents of cycle $C_j$ from left to right, excluding the initial letter $j$.  We then write the letters of $W_j$ in order, followed by the contents of cycle $C_{j-\frac{r-s}{2}}$, in a single block for each $j$.  This yields $\frac{r-s}{2}$ additional blocks containing the distinguished elements $s+1,\ldots,\frac{r+s}{2}$.  For the other $k$ blocks of $\beta$ (which contain cycles of $\alpha$ having only elements in $[r+1,r+n]$), express the permutation corresponding to the sequence of cycles contained therein as a word.  Putting these blocks together with the prior ones yields a Lah distribution containing all the elements of the set $[\frac{r+s}{2}]\cup[r+1,r+n]$ in which members of $[\frac{r+s}{2}]$ all belong to distinct blocks.  Subtracting $\frac{r-s}{2}$ from each letter  in $[r+1,r+n]$ yields a member of $\mathcal{L}_{\frac{r+s}{2}}(n,k)$, which we will denote by $f(\alpha,\beta)$.

To reverse the mapping $f$, given $L \in \mathcal{L}_{\frac{r+s}{2}}(n,k)$, we reconstruct its pre-image $(\alpha,\beta) \in \mathcal{E}$ as follows.  First observe that within the block of $L$ containing $i$ for some $i \in [s]$, a left-to-right minima (excepting $i$), taken together with the sequence of letters between it and the next minima, corresponds to a cycle belonging to the block of $\beta$ containing $(-i)$, with the letters to the right of and including $i$ forming the cycle $C_i$ of $\alpha$.  Within blocks of $L$ containing $i$ for $i \in [s+1,\frac{r+s}{2}]$, elements to the right of and including $i$ constitute cycle $C_i$ of $\alpha$, while those to the left of $i$ (if any) constitute the letters beyond the first of cycle $C_{i+\frac{r-s}{2}}$ in $\alpha$. Finally, writing the permutations in the undistinguished blocks of $L$ as cycles (and adding $\frac{r-s}{2}$ to each letter in $[\frac{r+s}{2}+1,\frac{r+s}{2}+n]$) yields the remaining cycles of $\alpha$ and blocks of $\beta$.

Now assume $r<s$ and let $\mathcal{E}$ consist of the ordered pairs $(\alpha,\beta)$ as before.  To define the mapping $f$ in this case, we proceed as follows.  For each $i \in [\frac{s-r}{2}+1,s]$, we delete the cycle $(-i)$ from its block within $\beta$ and then concatenate the contents of the remaining cycles, where cycles within a block are arranged in decreasing order of size of their first elements.  We then write the resulting word in a block followed by the contents of cycle $C_{i-\frac{s-r}{2}}$ if $\frac{s-r}{2}<i \leq \frac{s+r}{2}$, or followed by the contents of the cycles in the block containing the special cycle $(-(i-\frac{s+r}{2}))$ if $\frac{s+r}{2}<i \leq s$.  In the latter case, cycles within a block are arranged by decreasing order of first elements, except for the special cycle, which is first.

For each of the remaining blocks of $\beta$, we express the permutation corresponding to the sequence of cycles contained therein as a word. At this point, we have $k+\frac{s+r}{2}$ contents-ordered blocks of the set $S\cup [r+n]$ in which members of $S\cup[r]$ belong to distinct blocks, where $S=\{-1,-2,\ldots,-\frac{s-r}{2}\}$.  To each element of $[r+1,r+n]$, we add $\frac{s-r}{2}$, and to each element of $S$, we add $\frac{s+r}{2}+1$.  This results in a member of $\mathcal{L}_{\frac{r+s}{2}}(n,k)$ and the prior steps are seen to be reversible.  This completes the proof in the case $r<s$.  \qed\\

Modifying appropriately the proofs given above for (i) and (iv) in the case $r=s$ yields the following generalization in terms of $\mathcal{G}_{a,b}(n,k;r)$.

\begin{theorem}\label{genor}
If $0 \leq k \leq n$ and $r \geq 0$, then
\begin{equation}\label{genore1}
\delta_{n,k}=\sum_{j=k}^n (-1)^{j-k}\mathcal{G}_{a,b}(n,j;r)\mathcal{G}_{b,a}(j,k;r)
\end{equation}
and
\begin{equation}\label{genore2}
\mathcal{G}_{a,b}(n,k;r)=\sum_{j=k}^n \mathcal{G}_{a,t}(n,j;r)\mathcal{G}_{-t,b}(j,k;r).
\end{equation}
\end{theorem}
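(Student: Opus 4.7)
The cleanest route would be to derive both identities from Theorem \ref{gt1} as change-of-basis relations in a polynomial ring, rather than to construct weighted analogues of the sign-changing involutions from the proofs of parts (i) and (iv) of Theorem 3.1. The latter, combinatorial approach would require tracking how the product of weights $w(\alpha) w(\beta) = a^{nrec(\alpha)} b^{rec^*(\alpha)} \cdot b^{nrec(\beta)} a^{rec^*(\beta)}$ transforms under block-splitting and merging moves, and the most natural candidate involution (splitting the first block of $\alpha$ within the leftmost non-trivial block of $\beta$) does not preserve this product in general, so a subtler matching would be required.

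For \eqref{genore2}, the plan is to apply Theorem \ref{gt1} twice. A first application with $b$ replaced by $t$ expands $\prod_{i=0}^{n-1}(x + (a+t)r + ai)$ in the basis $\{\prod_{i=0}^{j-1}(x - ti)\}_j$ with coefficients $\mathcal{G}_{a,t}(n,j;r)$. A second application of Theorem \ref{gt1} with parameters $(-t, b)$ in place of $(a, b)$ and $n$ replaced by $j$, performed after substituting $x \mapsto x + (t-b)r$, expresses each $\prod_{i=0}^{j-1}(x - ti)$ as $\sum_k \mathcal{G}_{-t,b}(j,k;r) \prod_{i=0}^{k-1}(x + (t-b)r - bi)$. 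Combining the two and setting $y = x + (t-b)r$, the left-hand rising factorial becomes $\prod_{i=0}^{n-1}(y + (a+b)r + ai)$ (using $(a+t)r + (b-t)r = (a+b)r$); but this also equals $\sum_k \mathcal{G}_{a,b}(n,k;r) \prod_{i=0}^{k-1}(y - bi)$ by Theorem \ref{gt1} applied directly with parameters $(a,b)$. Comparing coefficients via linear independence of $\{\prod_{i=0}^{k-1}(y - bi)\}_{k \geq 0}$ yields \eqref{genore2}.

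For \eqref{genore1}, a similar chaining works. First, substituting $x \mapsto -x$ in Theorem \ref{gt1} and factoring out signs produces the dual expansion
\[
\prod_{i=0}^{n-1}(x - (a+b)r - ai) = \sum_{j=0}^{n} (-1)^{n-j} \mathcal{G}_{a,b}(n,j;r) \prod_{i=0}^{j-1}(x + bi).
\]
Next, applying Theorem \ref{gt1} with $(a,b)$ swapped after the shift $x \mapsto x - (a+b)r$ expands each $\prod_{i=0}^{j-1}(x + bi)$ as $\sum_k \mathcal{G}_{b,a}(j,k;r) \prod_{i=0}^{k-1}(x - (a+b)r - ai)$. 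Substituting into the previous display and comparing coefficients in the basis $\{\prod_{i=0}^{k-1}(x - (a+b)r - ai)\}_k$ gives $\sum_{j=k}^{n} (-1)^{n-j} \mathcal{G}_{a,b}(n,j;r) \mathcal{G}_{b,a}(j,k;r) = \delta_{n,k}$. Since this sum vanishes unless $n = k$, in which case only the term $j = n = k$ contributes (with both $(-1)^{n-j}$ and $(-1)^{j-k}$ equal to $1$), the sign $(-1)^{n-j}$ may be replaced by $(-1)^{j-k}$ throughout, giving \eqref{genore1}.

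The main obstacle, rather than being conceptual, will be the careful tracking of the variable shifts so that the rising factorials match the form on the left-hand side of Theorem \ref{gt1} exactly; once the key cancellations $(a+t)r + (b-t)r = (a+b)r$ and $-(a+b)r + (a+b)r = 0$ are verified, linear independence of the falling-factorial basis closes both arguments.
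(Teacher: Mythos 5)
Your proof is correct, but it takes a genuinely different route from the paper. The paper obtains Theorem \ref{genor} by weighted versions of the combinatorial arguments for parts (i) and (iv) of Theorem 3.1 in the case $r=s$ (a sign-reversing involution on pairs $(\alpha,\beta)$ for \eqref{genore1} and a bijection for \eqref{genore2}); it gives no algebraic derivation. Your argument instead treats Theorem \ref{gt1} as a change-of-basis statement and chains two instances of it, using that the polynomials $\prod_{i=0}^{k-1}(x-c-ai)$ are monic of degree $k$ in $x$ and hence linearly independent over $\mathbb{Q}(a,b,t)$. The variable shifts check out: for \eqref{genore2} the cancellation $(b-t)r+(a+t)r=(a+b)r$ aligns the left-hand sides, and for \eqref{genore1} the shift $y=x-(a+b)r$ expresses $\prod_{i=0}^{j-1}(x+bi)$ back in the basis $\prod_{i=0}^{k-1}(x-(a+b)r-ai)$. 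One point you should state explicitly is that applying Theorem \ref{gt1} with $a$ replaced by $-t$ is legitimate because \eqref{gt1e1} is a polynomial identity in the indeterminates $a,b$ for each fixed $n,r$, so it survives arbitrary specialization; this is what makes $\mathcal{G}_{-t,b}$ usable. Also, your justification for trading $(-1)^{n-j}$ for $(-1)^{j-k}$ is phrased loosely (knowing one signed sum vanishes does not by itself control a differently signed sum); the clean statement is that $(-1)^{j-k}=(-1)^{n-k}(-1)^{n-j}$ with the factor $(-1)^{n-k}$ independent of $j$, so the whole sum is multiplied by $(-1)^{n-k}$, which fixes $\delta_{n,k}$. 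What each approach buys: the paper's involutions explain the cancellation combinatorially and fit its stated program of bijective proofs, but the details of the weight-preservation under the block-splitting moves are left to the reader ("modifying appropriately"); your algebraic route is shorter, fully rigorous as written, and makes the orthogonality transparent as inverse connection-constant matrices, at the cost of offering no combinatorial insight.
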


\begin{corollary}\label{genorc1}
Let $(a_n)_{n=0}^\infty$ and $(b_n)_{n=0}^\infty$ be sequences of complex numbers.  Then we have $b_n=\sum_{k=0}^n \mathcal{G}_{a,b}(n,k;r)a_k, ~ n \geq 0,$ if and only if $a_n=\sum_{k=0}^n(-1)^{n-k}\mathcal{G}_{b,a}(n,k;r)b_k, ~ n \geq 0.$
\end{corollary}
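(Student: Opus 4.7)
The corollary is the standard inverse-pair statement associated to the orthogonality relation \eqref{genore1}, and the natural strategy is to deduce it by a routine interchange of summation once both sides of the orthogonality are in hand.

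My first step would be to record the matrix form: for each $N \geq 0$, view the arrays $A_{n,k}=\mathcal{G}_{a,b}(n,k;r)$ and $B_{n,k}=(-1)^{n-k}\mathcal{G}_{b,a}(n,k;r)$, $0\leq k\leq n\leq N$, as the entries of two lower-triangular $(N+1)\times (N+1)$ matrices. Their diagonal entries are all $1$, since $\mathcal{L}_r(n,n)$ contains a unique element in which every block is a singleton, giving $nrec=rec^*=0$ and so $\mathcal{G}_{a,b}(n,n;r)=\mathcal{G}_{b,a}(n,n;r)=1$. Thus both matrices are unitriangular, hence invertible. Identity \eqref{genore1} says exactly that their product (in one order) is the identity, so by finite-dimensional linear algebra the product in the opposite order is also the identity; explicitly,
$$\delta_{n,k}=\sum_{j=k}^n (-1)^{n-j}\mathcal{G}_{b,a}(n,j;r)\mathcal{G}_{a,b}(j,k;r),\qquad 0\leq k\leq n.$$

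With both forms of orthogonality available, each direction of the equivalence is a direct double-sum manipulation. For the forward direction, I would substitute $b_k=\sum_{j=0}^k \mathcal{G}_{a,b}(k,j;r)\,a_j$ into $\sum_{k=0}^n(-1)^{n-k}\mathcal{G}_{b,a}(n,k;r)\,b_k$ and swap the order of summation; the inner sum is precisely the derived orthogonality above and collapses to $\delta_{n,j}$, yielding $a_n$. The converse direction is symmetric: substitute $a_k=\sum_{j=0}^k(-1)^{k-j}\mathcal{G}_{b,a}(k,j;r)\,b_j$ into $\sum_{k=0}^n\mathcal{G}_{a,b}(n,k;r)\,a_k$, interchange, and apply \eqref{genore1} itself in place of the derived relation to obtain $b_n$.

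The only genuinely substantive point is the passage from the single orthogonality \eqref{genore1} to its opposite-order companion, which is automatic for square matrices but requires the (easy) verification that the diagonal entries equal $1$. Beyond that, the argument is bookkeeping, and no combinatorial input is needed beyond what Theorem \ref{genor} already supplies.
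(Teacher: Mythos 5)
Your argument is correct and coincides with the derivation the paper intends: the corollary is stated without proof as an immediate consequence of the orthogonality relation \eqref{genore1}, and your observation that both matrices are unitriangular (so that $AB=I$ forces $BA=I$), followed by the interchange of summation, supplies exactly the missing bookkeeping. (The opposite-order orthogonality you obtain by linear algebra can equivalently be read off from \eqref{genore1} with $a$ and $b$ interchanged, since the sign $(-1)^{j-k}$ differs from $(-1)^{n-j}$ only by the factor $(-1)^{n-k}$, which is $1$ whenever $\delta_{n,k}\neq 0$.)
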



\end{document}